\newtheorem{thr}{Theorem}
\newtheorem{lem}[thr]{Lemma}
\newtheorem{stat}[thr]{Proposition}
\theoremstyle{definition}
\newtheorem{defn}[thr]{Definition}
\newtheorem{example}[thr]{Example}
\newtheorem{question}[thr]{Question}
\theoremstyle{remark}
\newtheorem{remr}[thr]{Remark}
\numberwithin{equation}{section}
\def\R{\mathbb{R}}
\def\Ro{\overline{\mathbb{R}}}
\def\t{^{\tau}}
\def\S{\mathbb{S}}
\def\F{\mathcal{F}}
\def\per{\operatorname{per}}
\def\T{\mathcal{T}}
\begin{document}

\title{A separation between tropical matrix ranks}

\author{Yaroslav Shitov}
\email{yaroslav-shitov@yandex.ru}

\subjclass[2010]{15A03, 15A80, 14T05}

\keywords{Tropical linear algebra, supertropical mathematics, matrix rank}




\begin{abstract}
We continue to study the rank functions of tropical matrices. In this paper, we explain how to reduce the computation of ranks for matrices over the `supertropical semifield' to the standard tropical case. Using a counting approach, we prove the existence of a $01$-matrix with many ones and without large all-one submatrices, and we put our results together and construct an $n\times n$ matrix with tropical rank $o(n^{0.5+\varepsilon})$ and Kapranov rank $n-o(n)$.
\end{abstract}

\maketitle




The \textit{tropical} arithmetic operations on $\R$ are $(a,b)\to\min\{a,b\}$ and $(a,b)\to a+b$. One can complete this algebraic structure with an element neutral with respect to addition (it plays the role of an infinite positive element and is denoted by $\infty$) and get the structure $(\Ro,\min,+)$ known as the \textit{tropical semiring}. This semiring and related structures are being studied since the 1960's because of their applications in optimization theory~\cite{Vor}; the tropical methods also arise naturally in algebraic geometry and lead to important developments in the field (see e.g.~\cite{BH, CDPR, Mikh}). Other applications of tropical mathematics include operations research~\cite{CG}, discrete event systems~\cite{BCOQ}, automata theory~\cite{Sim}, and optimal control~\cite{KM, McE}.

This paper is a continuation of the study of tropical rank functions initiated in~\cite{DSS} and developed in~\cite{CJR, mylaa, mytrb, myproc}. Namely, we are going to focus on the tropical rank and Kapranov rank---the functions arisen from the context of tropical algebraic geometry. (It should be mentioned that there are many more rank functions of tropical matrices that are being extensively studied in the literature, see~\cite{AGG, GS} and references therein.) We refer the reader to~\cite{CJR, DSS, mytrb} for definitions and a detailed discussion of motivation behind these concepts; the rank functions also admit combinatorial descriptions, and we are going to recall them in Section~2. For the purpose of this introduction, we briefly recall that the \textit{tropical rank} of a matrix is the topological dimension of the tropical convex hull of its columns, and the \textit{Kapranov rank} is the smallest dimension of tropical linear spaces containing these columns. As we will see, one needs to specify a field $\mathbb{F}$ to give the definition of Kapranov rank, and the corresponding function is referred to as the Kapranov rank \textit{over} $\mathbb{F}$.
 
It is very well known that these functions can be different, but the tropical rank cannot exceed any of the Kapranov ranks (see~\cite{DSS}). The papers~\cite{CJR, DSS, mytrb} contain a resolution of the following question: For which $d,n,r$ does every $d\times n$ matrix of tropical rank less than $r$ have Kapranov rank (over $\mathbb{C}$) less than $r$ as well? This condition is equivalent to the $r\times r$ minors of a $d\times n$ matrix of variables being what is called a \textit{tropical basis} of the ideal that they generate. The paper~\cite{DSS} contains an example showing that the $4\times4$ minors of a $7\times 7$ matrix are not a tropical basis, and it was asked if it is the case for the minors of the $5\times 5$ matrix. The authors of~\cite{CJR} proved that the $4\times 4$ minors of a $5\times 5$ matrix form a tropical basis, and a complete description of the tuples $(d,n,r)$ for which the answer to the above question is positive was given in~\cite{mytrb}. The result below is valid for the Kapranov rank function computed with respect to any infinite field.

\begin{thr}\label{thrmytrb}
Let $d,n,r$ be positive integers, $r\leq\min\{d,n\}$. The $d$-by-$n$ matrices with tropical rank less than $r$ always have Kapranov rank less than $r$ if and only if one of the following conditions holds:

(1) $r\leq3$;

(2) $r=\min\{d,n\}$;

(3) $r=4$ and $\min\{d,n\}\leq6$.
\end{thr}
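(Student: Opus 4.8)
The plan is to prove the two implications separately, using the combinatorial descriptions of the two ranks recalled in Section~2. Recall that $M$ has tropical rank less than $r$ precisely when every $r\times r$ submatrix of $M$ is tropically singular (its tropical determinant attains its minimum at least twice), and $M$ has Kapranov rank less than $r$ precisely when $M$ is the coordinatewise valuation of some matrix of rank less than $r$ over the field of Puiseux series with coefficients in $\mathbb{F}$. Geometrically, the first condition bounds the dimension of the tropical convex hull of the columns of $M$, while the second says the columns lie in the tropicalization of an $\mathbb{F}$-linear subspace; since such tropical linear spaces are tropically convex, the second condition implies the first, which is exactly why tropical rank never exceeds Kapranov rank. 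So one implication of the theorem is free, and the task is to decide, for each triple $(d,n,r)$ with $r\le\min\{d,n\}$, whether tropical rank less than $r$ forces Kapranov rank less than $r$.

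For the ``if'' direction I would dispose of the three families in turn. The cases $r\le3$ and $r=\min\{d,n\}$ are essentially classical: $r\le2$ is straightforward; $r=3$ is the known fact that tropical rank at most $2$ forces Kapranov rank at most $2$; and $r=\min\{d,n\}$ holds because the maximal minors of a matrix of variables are a universal Gr\"obner basis, hence a tropical basis. The substantive case is $r=4$ with $\min\{d,n\}\le6$, i.e.\ the statement that every $6\times n$ matrix of tropical rank at most $3$ has Kapranov rank at most $3$. This must be argued uniformly in $n$, since the tropical convex hull of the columns can have arbitrarily many vertices even when its dimension is bounded (take many general points on a coordinate tropical subspace), so there is no reduction to bounded $n$. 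The approach would be to analyze the polyhedral and matroidal structure forced on a $6\times n$ matrix of tropical rank at most $3$, and in each resulting case either assemble an explicit lift of rank at most $3$ over $\mathbb{F}$ or show that its columns already lie in a coordinate (hence realizable) tropical linear space. This extends the Chan--Jensen--Rubei treatment of the $5\times n$ case in~\cite{CJR}, and I expect it to be by far the most delicate part of the whole argument.

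For the ``only if'' direction I would exhibit two ``primitive'' counterexamples and then propagate them by monotonicity. The first is the Develin--Santos--Sturmfels $7\times7$ matrix from~\cite{DSS} (or a variant of it valid over every infinite field) of tropical rank $3$ and Kapranov rank $4$. The second, which by conditions (1)--(3) cannot be inherited from any smaller matrix, is a $6\times6$ matrix of tropical rank exactly $4$ and Kapranov rank $5$; the constraints here are rigid (all $5\times5$ minors tropically singular, some $4\times4$ minor tropically nonsingular, and no lift of rank at most $4$ over $\mathbb{F}$), so I would construct it explicitly and certify non-liftability by a matroid-realizability obstruction chosen to be independent of the characteristic. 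Two routine lemmas then finish the job: (a) appending a duplicate row or a duplicate column changes neither rank, so one bad triple $(d_0,n_0,r)$ yields bad triples $(d,n,r)$ for all $d\ge d_0$ and $n\ge n_0$; and (b) both ranks are additive under the $\infty$-padded block-diagonal sum, so direct-summing a bad $(d_0,n_0,r_0)$ example with a $k\times k$ tropical identity yields a bad $(d_0+k,n_0+k,r_0+k)$ example. From the $7\times7$ example one then reaches every $(d,n,4)$ with $\min\{d,n\}\ge7$, and from the $6\times6$ example, applying (b) with $k=r-5$ and then (a), one reaches every $(d,n,r)$ with $r\ge5$ and $\min\{d,n\}\ge r+1$. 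These are precisely the triples excluded by conditions (1)--(3), so together with the first half the characterization is complete.

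The two hard points are therefore (i) the positive half of case~(3) --- proving uniformly in $n$ that a six-row matrix of tropical rank at most $3$ has Kapranov rank at most $3$, which I expect to require a long combinatorial case analysis going beyond the one in~\cite{CJR}; and (ii) making the $6\times6$ counterexample, and with it the whole ``only if'' direction, work over an arbitrary infinite field, which forces the non-liftability to be certified by a characteristic-free obstruction rather than by a computation over a single field.
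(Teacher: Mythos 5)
You should first note that the paper you are writing into does not prove Theorem~\ref{thrmytrb} at all: it is imported from~\cite{mytrb}, with the positive cases $r\leq 3$ and $r=\min\{d,n\}$ going back to~\cite{DSS} and the $5\times n$ instance of case~(3) to~\cite{CJR}. Measured against that literature, your skeleton is the right one: the free inequality (tropical rank $\leq$ Kapranov rank), invariance of both ranks under duplicating rows and columns, additivity under the $\infty$-padded direct sum (correct here because an $\infty$ entry forces the corresponding lift entry to be $0$, so lifts of a direct sum are block diagonal), and propagation from two primitive counterexamples --- a $7\times 7$ matrix of tropical rank $3$ and Kapranov rank $4$ for the triples $(d,n,4)$ with $\min\{d,n\}\geq 7$, and a $6\times 6$ matrix of tropical rank $4$ and Kapranov rank $5$ for all $r\geq 5$. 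Your arithmetic of which triples are reached is also correct, and the $6\times 6$ primitive you postulate does exist: it is exactly the matrix $A$ of Example~\ref{exam1111}, whose ranks $4$ and $5$ are established in~\cite{myarx1}.

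However, as a proof the proposal has genuine gaps, and they sit at precisely the two steps that constitute the theorem's content. (i) The positive half of case~(3) --- that every matrix with at most six rows and tropical rank at most $3$ has Kapranov rank at most $3$, uniformly in $n$ --- is the bulk of~\cite{mytrb} and is a long combinatorial case analysis; ``analyze the polyhedral and matroidal structure and in each case assemble a lift'' is a statement of intent, not an argument, and nothing in your sketch indicates how the unbounded number of columns is actually tamed. (ii) The ``only if'' direction requires the primitive examples to work over \emph{every} infinite field: the $7\times 7$ example of~\cite{DSS} is the cocircuit matrix of the Fano matroid, whose Kapranov rank drops to $3$ in characteristic $2$, so ``a variant valid over every infinite field'' and the characteristic-free certification of the $6\times 6$ example are exactly the constructions you would still have to supply (they are supplied in~\cite{myarx1, mytrb}); you flag both issues but resolve neither. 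Finally, one inference is incorrect as stated: being a universal Gr\"obner basis does not imply being a tropical basis (a monomial in an initial ideal $\operatorname{in}_w(I)$ need not occur among the initial forms of the generators), so the case $r=\min\{d,n\}$ cannot be dispatched by invoking the universal Gr\"obner property of maximal minors; that the maximal minors form a tropical basis is a theorem of~\cite{DSS} proved by a direct argument. In short: correct architecture, matching how the result is actually proved, but with the two substantive components asserted rather than proved.
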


The answer to the same question but for finite fields remains unknown, but the corresponding characterization should be different from the one given in Theorem~\ref{thrmytrb}. In fact, Example~2.7 in~\cite{mylaa} shows that matrices of tropical rank two and larger Kapranov rank exist for any finite ground field. As we see, there are a lot of results describing the cases when the tropical and Kapranov ranks are equal; on the opposite end, there is a result (see~\cite{KR}) stating that a matrix of tropical rank three can have arbitrarily large Kapranov rank. No non-trivial analogue of this result is known if we compare the behavior of the Kapranov rank functions taken over different fields.

\begin{question}(See also Question~5 in Section~8 of~\cite{DSS} and Problem~4.1 in~\cite{LGAG}.)
For which $d$ does there exist a matrix with rational Kapranov rank three and real Kapranov rank $d$? Clearly, this may be possible only if $d\geqslant 3$, and for $d=3$ the answer is trivially positive. The answer is also positive for $d=4$, and to see this, one can construct the \textit{cocircuit matrix} as in~\cite{DSS} for the matroid corresponding to the \textit{Perles configuration} (see page~94 of~\cite{Grun}). Nothing is known for $d\geqslant 5$.  
\end{question}

We note that the difference between the tropical rank and Kapranov rank in the above mentioned example is of the order of $0.5\sqrt{n}$, for an $n\times n$ tropical matrix (see Theorem~2.4 in~\cite{KR}). One can also construct a sequence of $n\times n$ matrices whose tropical rank and Kapranov rank differ by $\varepsilon n$ as a diagonal matrix whose diagonal blocks are equal to any fixed matrix with different ranks. In our paper, we improve these bounds to the asymptotically best possible separation of $n-o(n)$; our result is valid for the Kapranov ranks over all fields.

\begin{thr}\label{thrmainthis}
For all $n>1000$, $\alpha\in(0,0.1)$, there is an $n\times n$ matrix $A$ such that
$$\operatorname{tropical\,\,rank\,\,}(A)\leqslant\frac{4\sqrt{n}\ln n}{\alpha^2}\,\,\,\,\mbox{and}\,\,\,\,\operatorname{Kapranov\,\,rank\,\,}(A)\geqslant n(1-\alpha).$$
\end{thr}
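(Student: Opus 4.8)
The plan is to realise $A$ as a pattern matrix built from a $01$-matrix $B$, so that both of its ranks become combinatorial parameters of $B$. Concretely, set $A_{ij}=\infty$ exactly where $B_{ij}=1$ and put a finite entry --- $0$, or a generic ``ghost'' value, as permitted by the supertropical reduction of the preceding section --- wherever $B_{ij}=0$; the role of that reduction is precisely to let us treat the finite positions as generic units in any lift while keeping both ranks under control. From the combinatorial description recalled in Section~2, a square submatrix of $A$ is (super)tropically nonsingular exactly when the zeros of $B$ inside it form a bipartite graph with a \emph{unique} perfect matching, i.e.\ when $B$ restricted there is, after permuting rows and columns, lower unitriangular (zeros on the diagonal, ones strictly below, anything above). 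Such a $k\times k$ configuration contains an all-one block of size $\lfloor k/2\rfloor\times\lceil k/2\rceil$, so if $B$ has no all-one submatrix of size $s\times s$ then $\operatorname{tropical\,rank}(A)<2s$. On the other hand, unwinding the definition of the Kapranov rank over a field $\mathbb F$ (the valuation bookkeeping is again supplied by the supertropical reduction) identifies it with the minimum rank over $\mathbb F$ of a matrix whose zero set is \emph{exactly} the one-set of $B$ --- equivalently, with support exactly $\overline B$. Thus the theorem reduces to producing an $n\times n$ matrix $B$ with no all-one submatrix of size $s\times s$, where $2s\le 4\sqrt n\,\ln n/\alpha^{2}$, such that every matrix over every field with support exactly $\overline B$ has rank at least $(1-\alpha)n$.

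The second step is the counting lemma producing $B$. I would let each entry of $B$ be $0$ independently with probability $q=2\alpha^{2}/\sqrt n$ and $1$ otherwise, and take $s=2\sqrt n\,\ln n/\alpha^{2}$. The expected number of $s\times s$ all-one submatrices is at most $\binom{n}{s}^{2}(1-q)^{s^{2}}\le\exp\!\big(s\,(2\ln(en/s)-qs)\big)$, and since $qs=4\ln n$ while $2\ln(en/s)\sim\ln n$, this tends to $0$; hence a $B$ with \emph{no} such submatrix exists (no alteration even needed). This $B$ has $\overline B$ with $\asymp\alpha^{2}n^{3/2}$ edges, degrees $\asymp\alpha^{2}\sqrt n$ on both sides with no isolated vertex, and --- restating the forbidden-submatrix property --- no all-zero $s\times s$ rectangle: every $s$ rows and $s$ columns of $\overline B$ span an edge. (When $2s\ge n$ the asserted bound is vacuous, which is why the hypotheses isolate $n>1000$ and small $\alpha$.)

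The heart of the argument --- and the step I expect to be the main obstacle --- is upgrading ``$\overline B$ is sparse but meets every large rectangle'' to ``every matrix over every field with support exactly $\overline B$ has rank $\ge(1-\alpha)n$''. If the rank were some $r<(1-\alpha)n$, the column space would be an $r$-dimensional $V$ that, for every column $c$, contains a vector supported exactly on the $\overline B$-neighbourhood $N(c)$ of $c$. Since $|N(c)|\asymp\alpha^{2}\sqrt n$ is tiny, each such requirement is a Schubert-type incidence condition on $V$ of codimension $\approx n-r-\alpha^{2}\sqrt n$ in $\operatorname{Gr}(r,n)$, and for $r<(1-\alpha)n$ the $n$ conditions vastly over-determine the Grassmannian --- a naive count even suggests rank $\ge n-O(\alpha n^{3/4})$. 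Turning this into a proof is the crux, for two reasons. First, the subspaces $\mathbb F^{N(c)}$ are \emph{coordinate} subspaces, so general-position theorems do not apply; instead one must use the spreadness of $\overline B$ directly, extracting from a hypothetical $V$ a combinatorial obstruction --- for instance a small family of rows over which every column has $\neq 1$ neighbour --- that the no-large-rectangle property rules out. Second, over a finite field a dimension count proves nothing, so a genuine counting/incidence argument (presumably the ``counting approach'' of the abstract used a second time) is needed to exclude the bad $V$ there as well.

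Finally I would assemble the pieces. Fix $B$ from the lemma and let $A$ be the associated $0$-$\infty$ (or supertropical) matrix with $\infty$'s on the ones of $B$. Step one gives $\operatorname{tropical\,rank}(A)<2s=4\sqrt n\,\ln n/\alpha^{2}$, and the identification of the Kapranov rank with the minimum rank of a matrix supported on $\overline B$, together with the lower bound of step three, gives $\operatorname{Kapranov\,rank}(A)\ge(1-\alpha)n$ over every field. This is the asserted separation.
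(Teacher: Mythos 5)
Your proposal stalls exactly at the step you yourself flag as ``the crux'': the claim that every matrix, over every field, whose support is exactly $\overline B$ has rank at least $(1-\alpha)n$. Nothing in the proposal proves this, and the heuristic offered does not convert into a proof: the incidence conditions are imposed by \emph{coordinate} subspaces, so Grassmannian dimension counts do not apply even over $\mathbb{C}$, and over finite fields (which the theorem covers, since Kapranov rank is taken over an arbitrary $\mathbb{F}$) a dimension count says nothing at all. A counting repair is also blocked: over $\mathbb{F}_p$ the number of matrices of rank below $(1-\alpha)n$ is about $p^{(1-\alpha^2)n^2}$, while the entropy of the sparse random support $\overline B$ (density $\asymp\alpha^2/\sqrt n$) is only $e^{O(\alpha^{2}n^{3/2}\ln n)}$, so a union bound over low-rank matrices fails by an enormous margin. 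Your only handles on $B$ are its density and the absence of an $s\times s$ all-one block, and ``many zeros $+$ no large all-zero rectangle $\Rightarrow$ rank $\geq(1-\alpha)n$'' is precisely the kind of statement that would need a new idea; the proposal supplies none. (The first two steps --- the unique-perfect-matching characterization of nonsingular submatrices of a $0$/$\infty$ matrix and the first-moment construction of $B$ --- are fine, and the triangular-form observation giving tropical rank $<2s$ is correct.)

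The paper sidesteps this obstacle entirely, and that is the essential difference. Instead of a pure $0$/$\infty$ pattern, it places \emph{generic} finite entries $a_{ij\alpha}\in[1,1+1/(kd)]$, chosen $\mathbb{Q}$-linearly independent, at the $1$-positions of $\mathcal{M}$, and $0$ at the $0$-positions, duplicating each row $k$ times and appending an identity-like block of columns. Then (Lemma~\ref{lemkapb}) any lifting must contain $ku$ entries whose degrees are $\mathbb{Q}$-linearly independent, hence the entries are algebraically independent over $\mathbb{F}$, and the elementary bound that a rank-$\rho$ $n\times n$ matrix has transcendence degree at most $2n\rho-\rho^2$ forces $\rho\geq n-\sqrt{n^2-ku}$ --- a lower bound valid over every field, finite or infinite, with no minimum-rank-of-a-support problem in sight. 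The price is that the tropical rank bound can no longer be read off from unique matchings; this is handled by the row duplication together with the supertropical reduction (Theorems~\ref{equalKap} and~\ref{equaltrop}): summing two copies of a row produces ghost $0^\gamma$ entries at the $\mathcal{M}=0$ positions, and the no-large-all-ones property of $\mathcal{M}$ then makes every large minor ghost, giving tropical rank at most $d+kr$ (Lemma~\ref{lemtropb}). If you want to salvage your route, you would have to prove the support-min-rank lower bound for sparse random patterns over arbitrary fields, which is a substantial open-ended task; adopting the generic-entry/transcendence-degree mechanism is the way the paper closes exactly this gap.
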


Taking $n\to\infty$ and choosing $\alpha_n$ to be $(\ln n)^{-1}$ or any other sequence sufficiently slowly decreasing to $0$, we get an $n-o(n)$ separation between the tropical rank and Kapranov rank of an $n\times n$ tropical matrix. Since the Kapranov rank is a lower bound for the tropical factorization rank (see~\cite{DSS}), Theorem~\ref{thrmainthis} gives an $n-o(n)$ separation between the tropical rank and factorization rank as well. A similar question is wide open for separations between the conventional rank and factorization rank of nonnegative matrices, and related problems have important applications in optimization and computational complexity theory (see~\cite{FMPTdW}). The following version of this problem is open in both the tropical and nonnegative settings.

\begin{question}
Let $k$ be a fixed constant. Is it correct that, for all $n,m$ satisfying $m\geqslant n\geqslant k$, there exists an $n\times m$ matrix with tropical rank $k$ and tropical factorization rank $n$? Does there exist a nonnegative $n\times m$ matrix with conventional rank $k$ and nonnegative rank $n$?
\end{question}

The `nonnegative part' of this question has negative answer for $k\leqslant 3$ (this is easy for $k\leqslant 2$ as shown in~\cite{CR}, and the case of $k=3$ has been done in~\cite{PP, my7x7}). If $k\geqslant 4$, this problem remains open, see Question~1 in~\cite{Hru}. The `tropical part' is open already for $k=3$, and the problem is non-trivial even in the case $k\leqslant 2$ for which Theorem~4.6 in~\cite{myspb} gives a negative answer.


\section{Preliminaries. The rank of a supertropical matrix}

This paper was inspired by the idea of \textit{symmetrized semirings} (see~\cite{AGG, Plus}), which are intended to give an analogue of subtraction for those semirings that are not rings. The \textit{symmetrized tropical semiring} is essentially the set $\Ro\times\Ro$, and we may think of a pair $(r_1,r_2)$ as a formal subtraction $r_1\ominus r_2$. The tropical operations can be naturally extended to the symmetrized setting as $(r_1,r_2)\oplus(s_1,s_2)=(\min\{r_1,r_2\},\min\{s_1,s_2\})$ and $(r_1,r_2)\odot(s_1,s_2)=(\min\{r_1+s_1,r_2+s_2\},\min\{r_1+s_2, r_2+s_1\})$ because $\min$ is the tropical addition and $+$ is the tropical multiplication. A related structure was introduced by Izhakian and Rowen in~\cite{Izh, IRIR1} and became known as the `\textit{supertropical semifield}'. Their structure `\textit{is somehow reminiscent of the symmetrized max-plus semiring, and has two kind of elements, the “real” ones (which can be identified to elements of the max-plus semiring and some “ghost” elements which are similar to the “balanced” ones,}' as Akian, Gaubert, and Guterman wrote in~\cite{AGG}. We decided to write this paper in terms of the `\textit{supertropical}' structure because it seems to have become more popular nowadays due to a considerable amount of papers on the topic written by Izhakian, Rowen, and their colleagues (see also~\cite{IKR, IKR2} and references therein).

As said above, the structure introduced by Izhakian and Rowen belongs to the class most commonly known as `\textit{supertropical semifields}' (see~\cite{IKR}), but since it contains non-zero elements without multiplicative inverses, it is not an actual semifield according to the standard definition of the latter. We denote this structure by $\S=(\R^{\tau}\cup\R^{\gamma}\cup\{\infty\},\oplus,\odot)$, where $\infty$ is an infinite positive element, and $\R^\tau$, $\R^\gamma$ are two copies of $\R$ whose elements are called in the literature `\textit{tangible}' and `\textit{ghost}', respectively. Assuming $i,j\in\{\tau,\gamma\}$, $s\in\S$, $a,b\in \R$ and $a>b$, we define the operations by

\noindent $\infty\oplus s=s\oplus \infty=s$, $\infty\odot s=s\odot \infty=\infty$;

\noindent $b^{j}\oplus a^{i}=a^{i}\oplus b^{j}=b^{j}$, $b^{i}\oplus b^{j}=b^\gamma$;

\noindent $a^{i}\odot b^{j}=(a+b)^{\alpha}$,

\noindent where $\alpha=\tau$ if $i=j=\tau$, and $\alpha=\gamma$ otherwise. One can check that $\oplus$ and $\odot$ are commutative and associative operations, and distributivity also holds. Moreover, there is a homeomorphism $\nu$ from $\S$ to the tropical semiring $\Ro$ defined by $\infty\to\infty$ and $a^{i}\to a$. One writes $c\models d$ if either $c=d$ or $c=d\oplus g$, for some ghost element $g$; this relation is known in the literature as `\textit{ghost surpassing}' relation.

Let $A=(a_{ij})$ be an $n\times n$ supertropical matrix; its \textit{permanent} is
$$\operatorname{per} A=\bigoplus_{\sigma\in S_n}A(1|\sigma_1)\odot\ldots\odot A(n|\sigma_n),$$
where $S_n$ denotes the symmetric group on $\{1,\ldots,n\}$ and $A(p|q)$ denotes the entry in the $p$th row and $q$th column of $A$. This matrix is said to be \textit{tropically non-singular} if $\operatorname{per} A$ is tangible and \textit{tropically singular} otherwise.

\begin{defn}\label{defntr}
The \textit{tropical rank} of a supertropical matrix is the largest size of its non-singular square submatrix.
\end{defn}

In order to recall the definition of \textit{Kapranov rank}, we need to introduce the field $\F=\mathbb{F}\{\{t\}\}$ of \textit{generalized Puiseux series}. The elements of $\F$ are formal sums $a(t)=\sum_{e\in\mathbb{R}} a_et^{e}$ which have coefficients $a_e$ in $\mathbb{F}$ and whose \textit{support} $\operatorname{Supp}(a)=\{e\in\mathbb{R}: a_e\neq0\}$ is well-ordered (which means that every non-empty subset of $\operatorname{Supp}(a)$ has a minimal element). The \textit{tropicalization} mapping $\deg:K\rightarrow\Ro$ sends a series $a$ to the exponent of its \textit{leading term}; in other words, we define $\deg a=\min\operatorname{Supp}(a)$ and $\deg 0=\infty$.

\begin{defn}\label{defKap}
The \textit{Kapranov rank} of a supertropical matrix $A$ is the smallest possible rank of a matrix $L$ whose entries are in $\F$ and which satisfies $A\models\deg L$. (Such a matrix $L$ is to be called a \textit{lifting} of $A$.)
\end{defn}

\begin{remr}\label{remr1}
If $A$ is a supertropical matrix without ghost elements, then these rank functions match those of conventional tropical matrices as introduced and studied in~\cite{CJR, DSS, mylaa, mytrb}. Namely, the tropical rank and Kapranov rank of a matrix $T$ with entries in $\Ro$ coincide with those in Definitions~\ref{defntr} and~\ref{defKap} if the elements in $\R$ are replaced by their tangible copies.
\end{remr}

We finalize the section by proving two results using well known techniques.

\begin{stat}\label{statexp}
Let $A$ be an $n\times n$ supertropical matrix satisfying $A_{11}=A_{21}=0\t$ and $A_{31}=\ldots=A_{n1}=\infty$. Let $B$ be the $(n-1)\times(n-1)$ matrix obtained from $A$ by removing the first column and replacing the first two rows by their (supertropical) sum. Then $\operatorname{per} A=\operatorname{per} B$.
\end{stat}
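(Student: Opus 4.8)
The plan is to expand both $\per A$ and $\per B$ as $\oplus$-sums over permutations and to match the resulting terms, using the distributivity of $\odot$ over $\oplus$. The first thing I would record is that $0\t$ is the multiplicative identity of $\S$: directly from the definition of $\odot$ one gets $0\t\odot b^{j}=b^{j}$ for every $b^{j}\in\Rt\cup\Rg$, and also $0\t\odot\infty=\infty$.

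Next I would examine $\per A=\bigoplus_{\sigma\in S_n}A(1|\sigma_1)\odot\dots\odot A(n|\sigma_n)$ and observe that whenever $\sigma^{-1}(1)\geqslant3$ the corresponding product contains the factor $A(\sigma^{-1}(1)\,|\,1)=\infty$, hence equals $\infty$, which is the neutral element of $\oplus$ and so may be discarded. Consequently $\per A$ is the $\oplus$-sum of the \emph{type~I} terms (those with $\sigma_1=1$) and the \emph{type~II} terms (those with $\sigma_2=1$); these two families are disjoint because no permutation has $\sigma_1=\sigma_2=1$. Using $A_{11}=A_{21}=0\t$ and the identity just noted, a type~I term collapses to $A(2|\sigma_2)\odot\prod_{i=3}^{n}A(i|\sigma_i)$ and a type~II term to $A(1|\sigma_1)\odot\prod_{i=3}^{n}A(i|\sigma_i)$.

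Then I would turn to $\per B$. Index the rows of $B$ by $\{1',3,\dots,n\}$, where $1'$ denotes the merged row with entries $A(1|j)\oplus A(2|j)$, and its columns by $\{2,\dots,n\}$, so that $\per B=\bigoplus_{\pi}B(1'|\pi_{1'})\odot\prod_{i=3}^{n}B(i|\pi_i)$, the sum running over the bijections $\pi\colon\{1',3,\dots,n\}\to\{2,\dots,n\}$. Since $B(1'|j)=A(1|j)\oplus A(2|j)$ and $B(i|j)=A(i|j)$ for $i\geqslant3$, distributivity yields
$$\per B=\bigoplus_{\pi}\Bigl(A(1|\pi_{1'})\odot\prod_{i=3}^{n}A(i|\pi_i)\Bigr)\ \oplus\ \bigoplus_{\pi}\Bigl(A(2|\pi_{1'})\odot\prod_{i=3}^{n}A(i|\pi_i)\Bigr).$$
Now $\pi\mapsto\sigma$ given by $\sigma_1=\pi_{1'},\ \sigma_2=1,\ \sigma_i=\pi_i$ for $i\geqslant3$ is a bijection from the index set of the $\pi$'s onto the type~II permutations, under which the first sum becomes the $\oplus$-sum of all type~II terms; likewise $\pi\mapsto\sigma$ given by $\sigma_1=1,\ \sigma_2=\pi_{1'},\ \sigma_i=\pi_i$ for $i\geqslant3$ is a bijection onto the type~I permutations, turning the second sum into the $\oplus$-sum of all type~I terms. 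As $\oplus$ is commutative and associative, the right-hand side is exactly $\per A$.

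The only point that needs care is this last piece of bookkeeping: one has to check that the two displayed maps $\pi\mapsto\sigma$ really are bijections onto the type~I and type~II families, and that nothing besides these two families survives in $\per A$ once the $\infty$-valued terms are removed. Conceptually this is just the supertropical analogue of expanding, in an ordinary permanent, a single row written as the sum of two rows, and once distributivity in $\S$ (recalled above) is available no genuine obstacle arises.
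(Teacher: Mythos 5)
Your proof is correct and takes essentially the same route as the paper: the paper expands $\per A$ along the first column and then invokes linearity of the permanent in rows (citing Poplin--Hartwig), which is precisely your decomposition into the type~I/type~II terms together with the distributive splitting of the merged row of $B$. You simply verify these two cited semiring facts inline by explicit permutation bookkeeping, which is fine but not a different argument.
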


\begin{proof}
Let us expand $\operatorname{per} A$ with the first column (as in Lemma~3.2 in~\cite{PH}). Since $\infty$ and $0\t$ are neutral with respect to $\oplus$ and $\odot$, respectively, we get that $\operatorname{per} A$ is the sum of the permanents of the $(1,1)$ and $(2,1)$ cofactors of $A$. Since the permanent of a matrix is linear in its rows (see Lemma~3.13 in~\cite{PH}), the result follows.
\end{proof}

\begin{lem}\label{lemeas}
Let $A$ be a non-singular supertropical $n\times n$ matrix and $a,b\in\R$. Then one of the columns of $A$ can be replaced by $v=(a\t\,b\t\,\infty\ldots\infty)^\top$ so that the resulting matrix remains non-singular. Moreover, we can choose a column in which one of the first two entries is tangible.
\end{lem}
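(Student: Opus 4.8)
The plan is to use the permanent expansion from Proposition~\ref{statexp} as the engine. Since $A$ is non-singular, $\per A$ is a tangible element, and the tangible term in $\per A$ is achieved by a \emph{unique} optimal permutation $\sigma$; here I use the standard fact (as in the references to~\cite{PH}) that a tropical permanent is tangible precisely when the minimum in the $\bigoplus$ over $S_n$ is attained exactly once among the tangible contributions, i.e.\ the optimal assignment is unique. After permuting rows and columns (which changes neither rank nor non-singularity), I may assume $\sigma$ is the identity, so the diagonal of $A$ realizes $\per A$ and is the strictly best assignment.

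\smallskip

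Next I would set up the replacement. Fix a column index $k$ and consider the matrix $A^{(k)}$ obtained from $A$ by replacing its $k$th column with $v=(a\t\ b\t\ \infty\ldots\infty)^\top$. Expanding $\per A^{(k)}$ along the new column using the argument of Proposition~\ref{statexp} (after moving column $k$ to the front), one gets $\per A^{(k)}=\bigl(a\t\odot M_{1k}\bigr)\oplus\bigl(b\t\odot M_{2k}\bigr)$, where $M_{1k},M_{2k}$ are the permanents of the two relevant minors of $A$ (the $(1,k)$ and $(2,k)$ cofactors). I want to choose $k$ so that this sum is tangible; a tropical sum of two terms $x\oplus y$ is tangible iff one of $x,y$ is strictly smaller than the other \emph{and} that smaller one is itself tangible. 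So the task reduces to: find a column $k$, with $1\in\{$ rows allowed to contribute tangibly $\}$, such that exactly one of $a\t\odot M_{1k}$, $b\t\odot M_{2k}$ wins, and the winner is tangible.

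\smallskip

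The key observation making this possible is that the optimal diagonal of $A$ singles out, for each $k$, a natural candidate minor. Deleting row $1$ and column $k$, the best assignment in that minor is obtained from $\sigma=\mathrm{id}$ by rerouting: use entries $A_{22},\dots,$ except we must cover column $1$ and cannot use row $1$; the detour costs $A_{21}-A_{11}$ essentially. Concretely, among all $k$ one can track which of the two rows ($1$ or $2$) gives the cheaper completion, and since the diagonal assignment was \emph{strictly} optimal, for a generic choice of which column to hit these detour costs are all distinct; hence for a suitable $k$ the two quantities $a+\deg M_{1k}$ and $b+\deg M_{2k}$ are unequal. To also guarantee the winning minor permanent is \emph{tangible}, I would use that the relevant minor inherits a strictly optimal assignment from the uniqueness of $\sigma$ (removing one row and one column from a matrix with a unique optimal permutation leaves the induced best assignment still unique), so $M_{1k}$ or $M_{2k}$ is tangible. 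Finally the clause ``one of the first two entries is tangible'' is automatic because in $v$ both $a\t$ and $b\t$ are tangible and sit in rows $1$ and $2$.

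\smallskip

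The main obstacle I anticipate is the simultaneous control of \emph{two} conditions across the choice of $k$: strict inequality between $a\t\odot M_{1k}$ and $b\t\odot M_{2k}$ (so the sum is not a ghost from a tie), and tangibility of the smaller one (so the sum is not a ghost from a ghost summand). A single bad column could fail either condition, so the real content is a counting/pigeonhole step showing that \emph{some} column among the $n$ available works for both; here non-singularity of $A$ (which forbids certain degeneracies in all the minors simultaneously) is exactly what should rule out the case that every column fails. I would organize this by analyzing, for the unique optimal $\sigma$, the ``cycle structure'' of rerouting through column $1$ and showing the obstruction columns are too few.
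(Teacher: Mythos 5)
Your reduction of the problem to the expansion $\per A^{(k)}=(a^{\tau}\odot M_{1k})\oplus(b^{\tau}\odot M_{2k})$ and to finding a column $k$ with a strict, tangible winner is fine, but the existence of such a $k$ is the entire content of the lemma, and that is exactly the part you do not prove. The ``standard fact'' you lean on --- that deleting one row and one column from a matrix with a unique optimal permutation leaves a unique optimal assignment in the minor --- is false: the tangible matrix with $\nu$-values $\left(\begin{smallmatrix}0&5&5\\2&0&1\\1&5&0\end{smallmatrix}\right)$ is non-singular (the identity is strictly optimal), yet the minor obtained by deleting row $1$ and column $2$ is $\left(\begin{smallmatrix}2&1\\1&0\end{smallmatrix}\right)$, whose two diagonals tie at $2$, so the corresponding $M_{1k}$ is a ghost. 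The appeal to a ``generic choice'' of column is likewise inapplicable: $A$, $a$, $b$ are fixed data, and which column works genuinely depends on comparing $a$ with $b$, so no genericity argument is available. You acknowledge that everything rests on an unperformed counting/pigeonhole step controlling simultaneously the $\nu$-values and the tangibility of the cofactor permanents $M_{1k},M_{2k}$ as $k$ varies; as written this is a plan, not a proof. (A smaller inaccuracy: the permanent is tangible iff the minimum over \emph{all} permutations is attained exactly once and by an all-tangible product; a ghost contribution tying the best tangible one already makes the permanent a ghost, so ``attained exactly once among the tangible contributions'' is not the right criterion.)

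The missing idea is a normalization that makes the choice of column trivial. Permute only the columns so that the unique optimal permutation becomes the diagonal (its entries, in particular the $(1,1)$ and $(2,2)$ entries, are then tangible), and rescale rows and columns by tangible scalars, using the dual variables of the assignment problem for $\nu(A)$ (Hungarian method), so that the diagonal is $0^{\tau}$ and every off-diagonal entry is strictly positive. Then replacing column $1$ when $a<b$ (respectively column $2$ when $a\geqslant b$) yields permanent $a^{\tau}$ (respectively $b^{\tau}$): the complementary principal minor contributes $0^{\tau}$, while the competing term is strictly larger because every assignment in its minor must use a strictly positive off-diagonal entry; both required conditions (strict win and tangibility of the winner) become automatic, with no search over columns. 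This normalization also yields the ``moreover'' clause, which you misread: it asserts that the \emph{replaced column of $A$} has a tangible entry among its first two entries (here the $0^{\tau}$ in position $(1,1)$ or $(2,2)$), not the vacuous fact that $v$'s first two entries are tangible; and for this clause to keep its meaning one must not permute rows, only columns.
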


\begin{proof}
Since permutations of rows and columns and their scaling by elements in $\R^\tau$ cannot affect the non-singularity, we can apply the \textit{Hungarian algorithm} for the assignment problem corresponding to the matrix $\nu(A)$, see~\cite{Kuhn} for details. Therefore, we can assume without loss of generality that the diagonal entries of $A$ are equal to $0^\tau$, and the off-diagonal entries are positive. If $a<b$, then we choose the first column to be replaced by $v$, and otherwise we replace the second column.
\end{proof}

\section{Reducing the supertropical case to tropical matrices}\label{secred}

Let $S\in\S^{I\times J}$ be a supertropical matrix, where $I$ and $J$ denote the row and column indexing sets which we assume to be disjoint. Let us denote by $I^1$, $I^2$ two copies of the set $I$, and $i^1, i^2$ will stand for the elements that correspond to $i\in I$ in these copies.

\begin{defn}\label{def1111}
We call a tropical matrix $T$ \textit{symmetrized} if it has $I^1\cup I^2$ as row indexing set and $I\cup J$ as column indexing set (where $I,I^1, I^2, J$ are as above), and satisfies $T(i^1|i)=T(i^2|i)=0$, $T(i^1|\hat{\iota})=T(i^2|\hat{\iota})=\infty$ for all $i\in I$, $\hat{\iota}\in I\setminus\{i\}$.
\end{defn}

\begin{defn}\label{def11111}
Let $T$ be a matrix as in the above definition. We define $\Sigma\in\S^{I\times J}$ as the matrix obtained from $T$ by putting on the $i$th place the (supertropical) sum of the $i^1$th and $i^2$th rows of $T$ and removing the columns with indexes in $I$.
\end{defn}

The relation between $T$ and $\Sigma(T)$ can be understood in terms of the symmetrized tropical semiring discussed above. As we see, $T$ is obtained from $\Sigma(T)$ by replacing every row with a pair of rows which can be thought of as a vector over the symmetrized semiring corresponding to the row of $T$. Let us illustrate this construction with the example essentially appeared in the paper~\cite{myarx1} published in 2010. (Namely, the matrix $\T(A)$ below is the one from Example~2.1 in~\cite{myarx1} up to permutations of rows and columns and replacing the $4$'s by the $\infty$'s, which is not crucial for the argument given in~\cite{myarx1}.)

\begin{example}\label{exam1111}
Consider the symmetrized tropical matrix
$$A=\begin{pmatrix}
0&\infty&\infty&0&2&1\\
\infty&0&\infty&2&0&2\\
\infty&\infty&0&2&1&0\\
0&\infty&\infty&0&\infty&\infty\\
\infty&0&\infty&\infty&0&\infty\\
\infty&\infty&0&\infty&\infty&0
\end{pmatrix}
$$
and its supertropical counterpart
$$\Sigma(A)=\begin{pmatrix}
0^\gamma&2^\tau&1^\tau\\
2^\tau&0^\gamma&2^\tau\\
2^\tau&1^\tau&0^\gamma
\end{pmatrix}
$$
constructed as in Definition~\ref{def11111}. If the rows of $\Sigma(A)$ had indexes $1,2,3$ (from top to bottom) and its columns had indexes $4,5,6$ (from left to right), then the rows of $A$ are indexed with $1^1,2^1,3^1,1^2,2^2,3^2$, and the columns of $A$ with $1,2,3,4,5,6$.
\end{example}

One can check it directly that the tropical rank and Kapranov rank of the matrix $\Sigma(A)$ above are $1$ and $2$, respectively; it is proven in~\cite{myarx1} that the respective ranks of $A$ are $4$ and $5$. We can begin proving the main results of this section, which give a general relation between the ranks of $A$ and $\Sigma(A)$.

\begin{thr}\label{equalKap}
Let $T$ be a matrix as in Definition~\ref{def1111}. If $|\mathbb{F}|\geqslant3$, then
$$\operatorname{Kapranov\,\, rank\,\,}T=\operatorname{Kapranov\,\, rank\,\,}\Sigma(T)+|I|.$$
\end{thr}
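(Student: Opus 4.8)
The plan is to prove the two inequalities separately. For the upper bound $\operatorname{Kapranov\,rank\,}T\leqslant\operatorname{Kapranov\,rank\,}\Sigma(T)+|I|$, I would start with an optimal lifting $L\in\F^{I\times J}$ of $\Sigma(T)$ of rank $k=\operatorname{Kapranov\,rank\,}\Sigma(T)$, so $\Sigma(T)\models\deg L$. I need to build a lifting $M\in\F^{(I^1\cup I^2)\times(I\cup J)}$ of $T$ of rank $k+|I|$. The natural idea: for each $i\in I$, the supertropical row $\Sigma(T)(i|\cdot)$ is the $\oplus$-sum of the two tropical rows $T(i^1|\cdot)$ and $T(i^2|\cdot)$ (on the $J$-columns), and the $I$-columns of $T$ form, after removal, the identity-like pattern. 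So I would put $M(i^1|j)$ and $M(i^2|j)$ on the $J$-columns to be two Puiseux series whose leading terms `realise' the two tropical summands and whose sum (or appropriate linear combination) has leading term matching $L(i|j)$ — this is exactly where $|\mathbb{F}|\geqslant3$ is used, since we need scalars in $\mathbb{F}^{\times}$ other than $1$ to avoid accidental cancellation when forming $\Sigma$. On the $I$-columns we put $M(i^1|i)=M(i^2|i)=1$ (a unit, degree $0$) and $0$ elsewhere; these $|I|$ columns are clearly rank-$|I|$ and independent of the column space coming from the $J$-part, giving total rank $k+|I|$. One must check $\deg M$ ghost-surpasses $T$ entrywise, which is the routine `general position of coefficients' argument.

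For the lower bound $\operatorname{Kapranov\,rank\,}T\geqslant\operatorname{Kapranov\,rank\,}\Sigma(T)+|I|$, I would take an optimal lifting $M$ of $T$ and produce from it a lifting $L$ of $\Sigma(T)$ whose rank is at most $\operatorname{rank}M-|I|$. The key structural observation is that the $I$-columns of $T$ are $\infty$ off the two rows $i^1,i^2$ and $0$ on those, so in any lifting $M$, the column $M(\cdot|i)$ has entries of degree $0$ in rows $i^1,i^2$ and strictly positive degree elsewhere; reducing modulo $t$ (i.e.\ looking at leading coefficients at exponent $0$), these $|I|$ columns become — after row operations within each $\{i^1,i^2\}$ pair — linearly independent, so the span of the $I$-columns of $M$ has dimension exactly $|I|$. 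Quotienting the column space of $M$ by this $|I|$-dimensional subspace (equivalently, performing column operations to clear the $J$-columns against the $I$-columns and then deleting the latter, as in Definition~\ref{def11111}), and simultaneously adding row $i^2$ to row $i^1$ in the spirit of Proposition~\ref{statexp}, I get an $I\times J$ matrix $L'$ over $\F$ of rank $\leqslant\operatorname{rank}M-|I|$ whose degree matrix ghost-surpasses $\Sigma(T)$. The addition of paired rows is what converts two tropical rows into their supertropical $\oplus$-sum at the level of degrees, and one checks that whenever the two leading terms coincide this forces a ghost entry, matching the definition of $\Sigma(T)$.

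The main obstacle I anticipate is the lower-bound direction, specifically making the row-reduction/column-reduction argument compatible with the $\models$ relation. Over a field, rank drops by exactly $|I|$ when we quotient by the $I$-columns, but I must verify that after these manipulations the resulting $I\times J$ matrix genuinely \emph{lifts} $\Sigma(T)$ — i.e.\ that forming the supertropical sum of two lifted rows and then tropicalising gives something ghost-surpassed correctly, including the subtle case where cancellation in $\F$ could make the leading term of $M(i^1|j)+M(i^2|j)$ have strictly larger degree than $\min$ of the two, which is precisely the situation that must land in $\R^\gamma$ for $\Sigma(T)$ and hence is \emph{allowed} by $\models$. Tracking when a ghost is required versus merely permitted, and ensuring the column operations against the $I$-columns do not spoil the degree bounds on the $J$-columns (they add a positive-degree multiple, hence cannot decrease the degree below the target), is the delicate bookkeeping. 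I expect the hypothesis $|\mathbb{F}|\geqslant3$ to be essential only in the upper bound, where genuine non-cancellation must be arranged, whereas the lower bound should work over any field.
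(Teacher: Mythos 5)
Your overall strategy is the paper's: eliminate within each pair $\{i^1,i^2\}$ of rows so that the $I$-columns become an identity block and the leftover $I\times J$ corner is a lifting of $\Sigma(T)$, in both directions. However, two steps as written do not hold up. In the upper bound, your rank count is unjustified: observing that the $I$-columns are independent of the $J$-part only bounds $\operatorname{rank}M$ from \emph{below}, whereas you need $\operatorname{rank}M\leqslant k+|I|$; and requiring only that the paired combination of $M(i^1|j),M(i^2|j)$ have \emph{leading term} matching $L(i|j)$ is too weak, since the $J$-block could then have rank far exceeding $k$. You must rig the entries so that the combination is \emph{exactly} $L(i|j)$, e.g. $M(i^2|j)-M(i^1|j)=L(i|j)$ for all $j$ (this is what the paper's cases (ii)--(iv) arrange, with a scalar $\zeta\in\mathbb{F}$, $\zeta$ avoiding two forbidden values, which is exactly where $|\mathbb{F}|\geqslant3$ enters, as you say); then subtracting row $i^1$ from row $i^2$ gives the block form $\bigl(\begin{smallmatrix}\mathcal I&*\\ \mathcal O&L\end{smallmatrix}\bigr)$ and $\operatorname{rank}M=|I|+\operatorname{rank}L$.

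In the lower bound, the operation "adding row $i^2$ to row $i^1$" does not realize the quotient by the span of the $I$-columns, and the claimed bound $\operatorname{rank}L'\leqslant\operatorname{rank}M-|I|$ is false for the matrix of row \emph{sums} in characteristic $\neq2$: if the two rows of each pair coincide on the $J$-columns (e.g. $T(i^1|j)=T(i^2|j)=0$ for all $j$ with equal lifted entries), then $\operatorname{rank}M=|I|$, while the row-sum matrix is twice the common row and is nonzero, so it gives no bound on the Kapranov rank of $\Sigma(T)$. After scaling so that $M(i^1|i)=M(i^2|i)=1$ (and noting that the remaining entries of the $I$-columns are exactly the zero series, since $\infty\models d$ forces $d=\infty$), the kernel of the quotient map is spanned by $e_{i^1}+e_{i^2}$, so the quotient is computed by the pairwise \emph{differences} $M(i^2|j)-M(i^1|j)$; that difference matrix has rank exactly $\operatorname{rank}M-|I|$ and its degree matrix is ghost-surpassed by $\Sigma(T)$ (note the direction: one needs $\Sigma(T)\models\deg L'$, not the reverse), with the cancellation-only-at-ghost-entries point exactly as you describe. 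If your "clearing the $J$-columns against the $I$-columns" is performed so as to annihilate the $i^1$-entries, what survives in the $i^2$-rows is precisely this difference matrix and the subsequent "addition" is vacuous; but as stated, the addition is the wrong map. With subtraction in place of addition, your plan becomes the paper's proof, and you are right that $|\mathbb{F}|\geqslant3$ is needed only in the upper bound.
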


\begin{proof}
Let $\mathcal{L}$ be a lifting of $T$ with smallest possible rank $r$; row scalings allow us to assume that $\mathcal{L}(i^1|i)=\mathcal{L}(i^2|i)=1$ for all $i\in I$. Let $L'$ be the matrix obtained from $\mathcal{L}$ by subtracting, for any $i$, the $i^1$th row from $i^2$th row. The ranks of $\mathcal{L}$ and $L'$ are equal, and we have
\begin{equation}\label{eqLLLL}L'=\left(\begin{array}{c|c}
\mathcal{I}&*\\\hline
\mathcal{O}&L
\end{array}\right),
\end{equation}
where $\mathcal{I}$ and $\mathcal{O}$ are, respectively the unit and zero $|I|\times|I|$ matrices, $L$ is a lifting of $\Sigma(T)$, and $*$ stands for a matrix we need not specify. Therefore, the Kapranov rank of $\Sigma(T)$ is at most $r-|I|$.

Conversely, consider a lifting $L$ of $\Sigma(T)$ with smallest possible rank $\rho$. We define the matrix $\mathcal{L}\in\F^{(I^1\cup I^2)\times(I\cup J)}$ as follows. For all $i\in I$, $j\in J$, $\hat{\iota}\in I\setminus\{i\}$, we set

\noindent (i) $\mathcal{L}(i^1|i)=\mathcal{L}(i^2|i)=1$ and $\mathcal{L}(i^1|\hat{\iota})=\mathcal{L}(i^2|\hat{\iota})=0$,

\noindent (ii) $\mathcal{L}(i^1|j)=\zeta t^s$, $\mathcal{L}(i^2|j)=L(i|j)+\zeta t^s$ if $T(i^1|j)=T(i^2|j)=s$,

\noindent (iii) $\mathcal{L}(i^1|j)=t^s$, $\mathcal{L}(i^2|j)=L(i|j)+t^s$ if $s=T(i^1|j)>T(i^2|j)$,

\noindent (iv) $\mathcal{L}(i^1|j)=t^s-L(i|j)$, $\mathcal{L}(i^2|j)=t^s$ if $T(i^1|j)<T(i^2|j)=s$.

Since the ground field $\mathbb{F}$ contains more than two elements, we can avoid cancellation of leading (degree-$s$) terms in $L(i|j)+\zeta t^s$ by choosing an appropriate non-zero value of $\zeta$ in $\mathbb{F}$. As we see, the constructed matrix $\mathcal{L}$ is a lifting of $T$, and in order to compute its rank we subtract, as above, the $i^1$th row from $i^2$th row, for any $i$. We get the matrix $L'$ as in~\eqref{eqLLLL}, so the rank of $\mathcal{L}$ equals $\rho+|I|$.
\end{proof}

\begin{thr}\label{equaltrop}
Let $T$ be a matrix as in Definition~\ref{def1111}. Then
$$\operatorname{tropical\,\, rank\,\,}T=\operatorname{tropical\,\, rank\,\,}\Sigma(T)+|I|.$$
\end{thr}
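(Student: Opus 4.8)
The plan is to prove the two inequalities
$$\operatorname{tropical\,\,rank\,\,}T\leqslant\operatorname{tropical\,\,rank\,\,}\Sigma(T)+|I|\quad\text{and}\quad\operatorname{tropical\,\,rank\,\,}T\geqslant\operatorname{tropical\,\,rank\,\,}\Sigma(T)+|I|$$
separately, working with the permanent-based Definition~\ref{defntr}. Throughout, I would use the homeomorphism $\nu$ and the fact that, by Remark~\ref{remr1}, the tropical rank of a matrix with entries in $\Ro$ agrees with the supertropical rank once the $\R$-entries are declared tangible; so I may freely pass between the classical tropical picture and the supertropical one. The central tool is Proposition~\ref{statexp}, which tells us exactly what happens to a permanent when we have a column with a tangible $0$ in two rows and $\infty$ everywhere else, and we merge those two rows by supertropical addition: the permanent is unchanged. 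Definition~\ref{def11111} builds $\Sigma(T)$ from $T$ by iterating precisely this operation, once for each $i\in I$ (after reordering so the relevant column and rows are in position).

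\textbf{The upper bound.} Suppose $T$ has a non-singular $k\times k$ submatrix $M$. I want to produce a non-singular submatrix of $\Sigma(T)$ of size at least $k-|I|$. The obstruction is that $M$ may use only one of the two rows $i^1,i^2$ for some $i$, or may use a column indexed in $I$; I need to first enlarge $M$ to a more structured non-singular submatrix. Here is where Lemma~\ref{lemeas} (or rather its underlying idea) helps: starting from the non-singular $M$, for each $i\in I$ whose column $i$ is \emph{not} used by $M$ but at least one of whose rows $i^1$ or $i^2$ is used, I can adjoin column $i$ and the missing partner row; since column $i$ has tangible $0$ in rows $i^1,i^2$ and $\infty$ elsewhere, expanding the permanent along this column (as in the proof of Proposition~\ref{statexp}) shows non-singularity is preserved — the new permanent is $0\t\odot(\text{old permanent})$ plus ghost terms, hence tangible. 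Doing this for every such $i$, and then deleting any row $i^1,i^2$ and column $i$ with $i$ \emph{not} touched at all, I arrive at a non-singular submatrix $M'$ of $T$ that, for each $i$ it involves, uses both rows $i^1,i^2$ and the column $i$. Now apply Proposition~\ref{statexp} repeatedly: each such $i$ contributes a column-expansion step, merging rows $i^1,i^2$, deleting column $i$, and reducing the size by exactly one while preserving the (tangible) permanent. After all these merges I am left with a square submatrix of $\Sigma(T)$ with tangible permanent, i.e. a non-singular submatrix, of size $|M'|-(\text{number of merged }i\text{'s})\geqslant k-|I|$.

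\textbf{The lower bound.} Conversely, let $N$ be a non-singular $\ell\times\ell$ submatrix of $\Sigma(T)$, using row set $R\subseteq I$ and column set $C\subseteq J$. In $T$, consider the submatrix on rows $\{i^1,i^2:i\in R\}$ and columns $R\cup C$. Reordering so that for each $i\in R$ the pair $i^1,i^2$ sits atop column $i$ with the block structure of Definition~\ref{def1111}, I can reverse the merge: the submatrix of $T$ on these $2\ell$ rows and $2\ell$ columns has, after the obvious expansion along the $|R|$ columns indexed in $R$ (each contributing $0\t$ from its two tangible entries and $\infty$ elsewhere, exactly the situation of Proposition~\ref{statexp} read backwards), permanent equal to $\operatorname{per}N$ up to a tangible factor $(0\t)^{|R|}=0\t$ and possible additional ghost summands that do not affect tangibility. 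Hence this $2\ell\times 2\ell$ submatrix of $T$ is non-singular, so $\operatorname{tropical\,\,rank\,\,}T\geqslant 2\ell$. But $2\ell=\ell+|R|$ and I may assume $N$ was chosen so that $R=I$ is impossible to beat in the sense that taking the globally best $N$ gives $\ell=\operatorname{tropical\,\,rank\,\,}\Sigma(T)$ while $|R|\leqslant|I|$; the content I actually need is that I can pad: for any $i\in I\setminus R$, adjoin rows $i^1,i^2$ and column $i$ (tangible $0$'s, $\infty$'s elsewhere) to keep non-singularity by the same column-expansion argument, raising the size by $1$ each time until all of $I$ is incorporated, yielding a non-singular submatrix of $T$ of size $\ell+|I|$. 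Combining the two bounds gives the theorem.

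\textbf{Main obstacle.} The routine part is the permanent bookkeeping: verifying that every column-expansion step genuinely produces $0\t\odot(\text{previous permanent})$ plus ghost terms and therefore preserves tangibility, which is immediate from Proposition~\ref{statexp} and the linearity of the permanent (Lemma~3.13 in~\cite{PH}). The subtle point, and the one I would spend the most care on, is the upper-bound direction: a non-singular submatrix of $T$ need not respect the $(i^1,i^2,\text{column }i)$ grouping at all, and I must argue that any such submatrix can be modified — by adjoining missing partner rows and columns, and discarding the $i$-blocks it only half-uses — into one that does, \emph{without} decreasing the useful size below $k-|I|$ and \emph{without} destroying non-singularity. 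The bound $|I|$ on the loss comes precisely from the fact that each half-used or unused index $i$ costs at most one in this cleanup, and there are at most $|I|$ of them.
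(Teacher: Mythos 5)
Your lower-bound half is essentially the paper's own argument: the paper takes a largest non-singular submatrix $C$ of $\Sigma(T)$ with row set $I_0$ and builds the submatrix of $T$ on rows $I_0^2\cup I^1$ and columns $I\cup J_0$, whose block form contains a tropical unit block $\mathcal{U}_{n-r}$ for the indices outside $I_0$, and Proposition~\ref{statexp} gives that its permanent equals $\per C$. Your version is the same construction up to a slip: for $i\in I\setminus R$ you adjoin \emph{two} rows $i^1,i^2$ but only one column $i$, which destroys squareness; adjoining only $i^1$ together with column $i$ (this is what the unit block in~\eqref{eqTTTT} amounts to) fixes it, and then the column expansion does preserve the permanent exactly, with no extra summands.

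The genuine gap is in the upper bound. Your key claim --- that adjoining column $i$ and the missing partner row preserves non-singularity because ``the new permanent is $0\t\odot(\text{old permanent})$ plus ghost terms'' --- is false. Expanding along column $i$ produces two cofactor terms: the old permanent, and the permanent of the old matrix with the present row (say $i^1$) replaced by the adjoined row $i^2$. The second term is not a ghost in general, and if its $\nu$-value is less than or equal to that of the old permanent, the sum is ghost or otherwise uncontrolled. Concretely, take $I=\{i\}$, $J=\{j\}$ with $T(i^1|j)=T(i^2|j)=5$: the $1\times1$ submatrix on row $i^1$ and column $j$ is non-singular, but adjoining column $i$ and row $i^2$ gives $\left(\begin{smallmatrix}0&5\\0&5\end{smallmatrix}\right)$, whose permanent is $5^\gamma$, so non-singularity is destroyed. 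Your cleanup also leaves cases unhandled (column $i$ used but only one of $i^1,i^2$ used; both rows used but column $i$ unused, where there is no ``missing partner row'' and adjoining column $i$ alone breaks squareness). This is precisely the difficulty that the paper's Lemma~\ref{lemeas} is designed to overcome: instead of adjoining, one \emph{replaces} a column of the non-singular submatrix by the unit-like column $v$, after normalizing with the Hungarian algorithm, and the ``moreover'' clause (the discarded column has a tangible entry in row $i^1$ or $i^2$) guarantees that previously installed special columns are never kicked out, so the largest non-singular submatrix can be brought to the form~\eqref{eqTTTT} without changing its size. You invoke Lemma~\ref{lemeas} only nominally (``or rather its underlying idea'') but never actually use it; without it, or some comparable replacement-and-normalization argument, the reduction of an arbitrary non-singular submatrix of $T$ to the structured form is unjustified, and the inequality $\operatorname{tropical\,\,rank\,\,}T\leqslant\operatorname{tropical\,\,rank\,\,}\Sigma(T)+|I|$ is not proved.
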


\begin{proof}
Denote by $I_0,J_0$ the sets of row and column indexes of a largest non-singular submatrix $C$ of $\Sigma(T)$. Denoting $|I|=n$ and $|I_0|=|J_0|=r$, we observe that the submatrix of $T$ formed by the rows with indexes in $I_0^2\cup I^1$ and columns with indexes in $I\cup J_0$ looks like (with upper left block having row indexes in $I_0^2\cup I_0^1$ and column indexes in $I\setminus I_0$)
\begin{equation}\label{eqTTTT}
\left(\begin{array}{c|c}
\mathcal{Z}_{2r\times(n-r)}&T'\\\hline
\mathcal{U}_{n-r}&*
\end{array}\right),
\end{equation}
where $\mathcal{U}$ is the tropical unit matrix (the one with $0$'s on the diagonal and $\infty$'s everywhere else), $\mathcal{Z}$ is the all-$\infty$ matrix, and $T'$ is a symmetrized matrix such that $\Sigma(T')=C$. The permanent of~\eqref{eqTTTT} equals $\per(T')$, which in turn, according to Proposition~\ref{statexp}, equals $\per(C)$. Since $C$ is non-singular, it has a tangible permanent, and so does the matrix~\eqref{eqTTTT}, which is therefore non-singular as well. In particular, we get a `$\geqslant$' inequality for the values in the formulation of the lemma.

In order to prove the `$\leqslant$' inequality, we use Lemma~\ref{lemeas} and observe that any of the largest tropically non-singular submatrices of $T$ can be reduced to the form~\eqref{eqTTTT}, and the proof can be finalized as in the previous paragraph.
\end{proof}

\section{Constructing a tropical matrix}

In this section, we explain how to construct a tropical matrix with small tropical rank and large Kapranov rank if we are given a $0-1$ matrix as below.

\begin{defn}\label{defgood}
Numbers $(d,k,r,u)$ are said to be a \textit{good} tuple if there exists an $d\times (kd-d)$ matrix $\mathcal{M}$ of zeros and ones such that

\noindent (1) at least $u$ entries of $\mathcal{M}$ are ones;

\noindent (2) any $\rho\times\rho$ submatrix of $\mathcal{M}$ contains a zero unless $\rho<r$.
\end{defn}

We enumerate the rows and columns of $\mathcal{M}$ by disjoint sets $I$ and $J$ and construct the tropical matrix $\Phi=\Phi(\mathcal{M})$ as follows. Its rows are indexed with $\{1,\ldots,k\}\times I$, its columns with $I\cup J$, and its entries are

\noindent (1) $\Phi(\alpha,i|i)=0$ and $\Phi(\alpha,i|\hat{\imath})=\infty$ if $\hat{\imath}\in I\setminus\{i\}$;

\noindent (2) $\Phi(\alpha,i|j)=0$ if $j\in J$ and $\mathcal{M}(i|j)=0$;

\noindent (3) $\Phi(\alpha,i|j)=a_{ij\alpha}$ if $j\in J$ and $\mathcal{M}(i|j)=1$,

\noindent where $(a_{ij\alpha})$ are a family of numbers in $[1,1+1/(kd)]$ that are linearly independent over $\mathbb{Q}$. Notice that $\Phi$ is an $n\times n$ matrix with $n=kd$.

\begin{lem}\label{lemkapb}
$\operatorname{Kapranov\,\, rank\,\,}\Phi\geqslant n-\sqrt{n^2-ku}$.
\end{lem}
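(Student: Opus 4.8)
The plan is to bound the Kapranov rank of $\Phi$ from below by analysing a lifting $L$ of $\Phi$ over the generalized Puiseux series field $\F$ and counting how many of the $a_{ij\alpha}$-entries must be "seen" by a low-rank lifting. Suppose $\operatorname{Kapranov\,\,rank\,\,}\Phi=\rho$, so there is a lifting $L\in\F^{n\times n}$ of rank $\rho$. The $|I|$ columns indexed by $I$ carry the tropical unit matrix pattern (entries $0$ on the "$(\alpha,i|i)$ for every $\alpha$" positions and $\infty$ off the block), so these columns force a lot of structure: after row scalings the $L$-entries over column $i\in I$ are units on all rows $(\alpha,i)$, $\alpha=1,\dots,k$, and $0$ on the rows $(\alpha,\hat\imath)$ with $\hat\imath\neq i$. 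I would first record the consequence that, because $L$ has rank $\rho$, its columns span a $\rho$-dimensional space; since the $|I|$ special columns together with the $\infty$-pattern behave like an identity-type block, a Gaussian-elimination argument (exactly as in the proof of Theorem~\ref{equalKap}, using~\eqref{eqLLLL}) lets me peel off those columns and reduce to understanding a submatrix involving only the columns in $J$.

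The heart of the argument is a counting / linear-algebra bound on how many of the "one" positions $\mathcal{M}(i|j)=1$ can be respected by a rank-$\rho$ lifting. The key point is that if $L$ has rank $\rho$, then any $(\rho+1)\times(\rho+1)$ submatrix of $L$ is singular, hence has vanishing determinant over $\F$; tropicalizing (taking $\deg$) this identity, every $(\rho+1)\times(\rho+1)$ submatrix of $\Phi$ must be tropically singular in the sense that the minimum in its tropical permanent expansion is attained at least twice, \emph{or} a cancellation occurs among leading terms. Because the $a_{ij\alpha}$ are $\mathbb{Q}$-linearly independent and lie in the tiny interval $[1,1+1/(kd)]$, while the $0$-entries and the unit column entries have degree $0$, the only way a $(\rho+1)\times(\rho+1)$ minor of $\Phi$ can be tropically singular is essentially combinatorial: it must not contain a permutation all of whose entries are $a$-type (i.e., at the "one" positions) unless that permutation's value is tied with another, which generic independence of the $a$'s precludes; so such a minor must instead hit enough $0$'s. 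Concretely I would show that a rank-$\rho$ lifting forces the bipartite graph of "one" positions that $L$ "genuinely uses" to contain no $K_{\rho+1}$-worth of an $a$-supported permutation, and then translate the hypothesis $\mathcal{M}$ has $u$ ones into a quantitative lower bound on $\rho$.

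For the actual inequality $\rho\geqslant n-\sqrt{n^2-ku}$, I expect the cleanest route is a double-counting / convexity estimate: with $n=kd$ rows and columns, if the lifting has rank $\rho$ then the "defect" $n-\rho$ controls the size of a common zero pattern, and one gets that the number of ones $u$ satisfies $ku\leqslant n^2-(n-\rho)^2$, i.e. $ku\leqslant\rho(2n-\rho)$, which rearranges to $\rho\geqslant n-\sqrt{n^2-ku}$. The factor $k$ appears because $\Phi$ replicates each row of $\mathcal{M}$ $k$ times (rows indexed by $\{1,\dots,k\}\times I$), so each "one" of $\mathcal{M}$ contributes $k$ copies of an $a$-type entry, each with its own linearly independent label $a_{ij\alpha}$; this replication is exactly what lets the counting bound "see" $ku$ rather than $u$ ones. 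I would make this precise by choosing, for a rank-$\rho$ lifting $L$, a set of $\rho$ rows and $\rho$ columns whose corresponding $\rho\times\rho$ submatrix of $L$ is invertible; the complementary $(n-\rho)\times(n-\rho)$ block must then be consistent with the rank constraint, and I'd argue its preimage in $\mathcal{M}$ is forced to be all-zero on a large set, bounding the ones.

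The main obstacle will be the second step: ruling out "accidental" tropical singularity of large $a$-supported minors and handling potential leading-term cancellations in $\F$. The interval condition $a_{ij\alpha}\in[1,1+1/(kd)]$ and $\mathbb{Q}$-linear independence are tailored to defeat these accidents — the smallness of the interval keeps every $a$-supported term strictly cheaper than any term using two or more unit/zero entries of the wrong type, and independence over $\mathbb{Q}$ makes all $a$-supported permutation values distinct — but assembling these observations into a clean statement that "a rank-$\rho$ lifting of $\Phi$ yields a $0$-submatrix of $\mathcal{M}$ of size roughly $(n-\rho)/k$ in one direction" and then optimizing the resulting inequality is where the real work lies; everything else (the identity-block reduction, the passage from $\det L=0$ to a tropical statement) is standard and parallels the techniques already used in Section~\ref{secred} and in~\cite{DSS, mytrb}.
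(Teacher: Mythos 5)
There is a genuine gap: your proposal never uses the one fact the construction is designed to exploit, namely that entries of a lifting whose degrees are the $\mathbb{Q}$-linearly independent numbers $a_{ij\alpha}$ are \emph{algebraically independent} over $\mathbb{F}$ (a polynomial relation among series whose degree vector is $\mathbb{Q}$-linearly independent is impossible, since distinct monomials in these series have distinct leading exponents). The paper's proof is exactly this: any lifting has $ku$ such entries, the set of $n\times n$ matrices of rank $\rho$ has transcendence degree (dimension) at most $2n\rho-\rho^2$, hence $ku\leqslant 2n\rho-\rho^2$, which rearranges to $\rho\geqslant n-\sqrt{n^2-ku}$. Your ``heart of the argument'' instead tropicalizes vanishing $(\rho+1)\times(\rho+1)$ minors and tries to extract combinatorial structure from tropical singularity. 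This cannot work: by Lemma~\ref{lemtropb} the matrix $\Phi$ is built so that all moderately large minors are \emph{already} tropically singular (the replicated $0$-entries at the positions $\mathcal{M}(i|j)=0$ create ties everywhere), so the statement ``every large minor of $\Phi$ is tropically singular'' carries no information about the Kapranov rank and yields no lower bound on $\rho$. The obstruction to a low-rank lifting lives at the level of algebraic relations among the Puiseux-series entries, not at the level of the tropical permanent of $\Phi$.

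The final step is also asserted rather than derived. You claim that a rank-$\rho$ lifting forces ``a common zero pattern'' of size about $(n-\rho)\times(n-\rho)$ in $\mathcal{M}$, and from this you write $ku\leqslant n^2-(n-\rho)^2$. Low rank does not force any zero pattern (a generic rank-$\rho$ matrix has no zero entries at all), and indeed $\mathcal{M}$ is chosen in Section~5 to have its ones spread out so that \emph{no} large all-one (equivalently, no large zero-free) submatrix exists; so the inequality cannot come from a zero-submatrix count. That $n^2-(n-\rho)^2$ equals $2n\rho-\rho^2$ is the dimension of the rank-$\leqslant\rho$ locus, and the only justification available for bounding $ku$ by it is the transcendence-degree argument above. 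The identity-block ``peeling'' you describe at the start is likewise unnecessary for this lemma (it belongs to the proofs in Section~\ref{secred}); once you invoke algebraic independence plus the dimension bound, the proof is three lines.
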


\begin{proof}
Any lifting of $\Phi$ has $ku$ entries with degrees in $(a_{ij\alpha})$, and since these degrees are linearly independent over $\mathbb{Q}$, the corresponding entries should be algebraically independent over $\mathbb{F}$. It remains to note that any matrix of rank $\rho$ has transcendence degree at most $2n\rho-\rho^2$ and resolve the inequality $2n\rho-\rho^2\geqslant ku$ for $\rho$.
\end{proof}

\begin{lem}\label{lemtropb}
$\operatorname{Tropical\,\, rank\,\,}\Phi\leqslant d+kr$.
\end{lem}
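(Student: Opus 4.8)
The plan is to bound the tropical rank of $\Phi$ by exhibiting, for any square non-singular submatrix of $\Phi$, a combinatorial obstruction forcing its size to be at most $d+kr$. Recall that by Definition~\ref{defntr} the tropical rank of $\Phi$ equals the largest size of a tropically non-singular square submatrix, so it suffices to show that every $s\times s$ submatrix with $s>d+kr$ is tropically singular, i.e.\ has permanent attained by at least two distinct permutations (equivalently, with the ``winning'' term being a ghost in supertropical language, or with the minimum in $\per\nu(\cdot)$ being non-unique).

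First I would analyze the structure of $\Phi$. The columns indexed by $I$ together with rows $\{1,\dots,k\}\times I$ form a highly degenerate block: row $(\alpha,i)$ restricted to the $I$-columns is the standard basis vector supported at column $i$ (value $0$ there, $\infty$ elsewhere). In the $J$-columns, an entry is either $0$ (when $\mathcal M(i|j)=0$) or some $a_{ij\alpha}\in[1,1+1/(kd)]$ (when $\mathcal M(i|j)=1$). So the only entries strictly between $0$ and $\infty$ that can contribute non-trivially to a minimum-weight assignment are the $a_{ij\alpha}$'s, and they are all very close to $1$.

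The key step is a counting argument on an optimal assignment $\sigma$ in a candidate non-singular $s\times s$ submatrix $S$. Since $S$ is non-singular, its permanent is finite, so $\sigma$ uses no $\infty$-entry; in particular each selected $I$-column forces its matched row to be the unique $(\alpha,i)$-type row compatible with it, and each selected $J$-column is matched to some row $(\alpha,i)$ with $\mathcal M(i|j)=1$ (using an $a_{ij\alpha}$-entry) or with $\mathcal M(i|j)=0$ (using a $0$-entry). I would count: among the $s$ chosen rows, at most $d$ of them can be ``first used'' in the sense that distinct values of $i\in I$ appear at most $d$ times, so at least $s-d$ rows $(\alpha,i)$ share their $i$-index with another chosen row. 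Now here is where Definition~\ref{defgood}(2) enters: the set of $J$-columns matched via an $a_{ij\alpha}$-entry, together with the corresponding $i$-indices, would give an all-ones submatrix of $\mathcal M$ of size roughly $(s-d)/k$ (the $/k$ because each $i$ can be repeated at most $k$ times among the rows), which must be smaller than $r$; hence $s-d<kr$, giving $s\le d+kr$. The non-singularity is what I'd then contradict: if $s>d+kr$, then more than $kr$ rows share $i$-indices, forcing either a repeated $i$ used on two $0$-entries (which immediately produces a second optimal assignment by a swap within that $0$-block, since all those entries equal $0$), or an $r\times r$ all-ones block in $\mathcal M$ (contradicting goodness). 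Either way $S$ is tropically singular.

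The main obstacle I expect is making the swap/alternating-cycle argument precise: I need to show that whenever two chosen rows $(\alpha,i)$, $(\alpha',i)$ share the index $i$ and the assignment avoids $\infty$, there is a genuinely different assignment of the \emph{same} weight — this uses that the $I$-column $i$ (if chosen) plus the pattern of $0$-entries in the shared $J$-columns creates an even alternating cycle of weight-zero moves, or, when $a$-entries are involved, that the linear independence over $\mathbb Q$ of the $a_{ij\alpha}$ forces any competing assignment to differ only in $0$-entries. Handling the interplay between the $I$-columns and the $J$-columns carefully — i.e.\ exactly which rows are ``pinned'' by an $I$-column and which are free — is the delicate bookkeeping; once the all-ones submatrix of $\mathcal M$ is correctly extracted, Definition~\ref{defgood}(2) closes the argument and yields $\operatorname{Tropical\ rank\ }\Phi\le d+kr$.
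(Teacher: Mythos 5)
Your overall frame (show every square submatrix of size exceeding $d+kr$ is singular, pigeonhole on the $i$-indices to find many repeated indices) matches the paper's opening move, but the central step of your argument has a genuine gap. You claim that the $J$-columns matched through $a_{ij\alpha}$-entries, together with their $i$-indices, ``give an all-ones submatrix of $\mathcal{M}$'' of size about $(s-d)/k$, which you then want to play against Definition~\ref{defgood}(2). This does not follow: the positions used by an assignment form a partial matching inside the $1$-pattern of $\mathcal{M}$, one position per column, not a combinatorial rectangle. A matrix can contain a perfect matching of ones while having no $2\times 2$ all-ones block, so condition (2) puts no bound at all on how many rows of an optimal assignment sit on $a$-entries, and the inequality $s-d<kr$ does not follow from your dichotomy. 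Your swap branch (two rows with the same $i$ both matched to coincident $0$-entries) is fine, but it does not cover the genuinely problematic configuration: each repeated pair $(\alpha_i,i),(\beta_i,i)$ may have one row on a $0$-entry (say its $I$-column) and the other on an $a$-entry, in which case no weight-preserving swap exists and no all-ones rectangle is visible, yet the submatrix is still singular.

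What actually forces singularity there is a weight comparison that your proposal never invokes and that uses the normalization $a_{ij\alpha}\in[1,1+1/(kd)]$ (you mention it descriptively but use only the $\mathbb{Q}$-linear independence). The paper restricts to the $2\rho$ pair-rows ($\rho\geqslant r$ repeated indices), observes this block is a symmetrized matrix, and via Theorem~\ref{equaltrop} (with Proposition~\ref{statexp}) reduces singularity of every $2\rho\times 2\rho$ submatrix to singularity of every $\rho\times\rho$ submatrix of the supertropical matrix $\mathcal{H}$, whose entries are $0^\gamma$ where $\mathcal{M}$ has a $0$ and tangible values in $[1,1+1/n]$ where $\mathcal{M}$ has a $1$. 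Since $\rho\geqslant r$, goodness guarantees a single $0$ in every $\rho\times\rho$ block of $\mathcal{M}$, and then any permutation routed through that $0^\gamma$ costs at most $(\rho-1)(1+1/n)<\rho$, while every all-tangible permutation costs at least $\rho$; hence the minimum is attained only by terms containing a ghost and the permanent is not tangible. In other words, one zero per large block suffices because ghosts are uniformly cheaper than tangibles — no extraction of an all-ones rectangle from the matching is needed, and without this (or an equivalent exchange/weight argument) your proof does not close.
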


\begin{proof}
Let $H$ be a square submatrix of $\Phi$ of size greater than $d+kr$. We need to check that $H$ is singular, that is, that the permanent of $H$ seen as a supertropical matrix is either $\infty$ or a ghost. By Dirichlet's principle, there is a subset $\mathcal{I}\subset I$ of cardinality $\rho\geqslant r$ such that, for any $i\in\mathcal{I}$, there are two distinct pairs $(\alpha_i,i)$ and $(\beta_i,i)$ which appear to be row indexes of $H$. We denote by $H_0$ the submatrix of $\Phi$ formed by the rows with indexes in $\bigcup_{i\in\mathcal{I}}\{(\alpha_i,i),(\beta_i,i)\}$; we will be done if we manage to show that every $2\rho\times 2\rho$ submatrix of $H_0$ is singular.

By Theorem~\ref{equaltrop}, we need to show that every $\rho\times \rho$ submatrix of $\Sigma(H_0)$ is singular. Removing the columns of $\Sigma(H_0)$ consisting of $\infty$-entries, we get a matrix $\mathcal{H}\in\S^{\mathcal{I}\times J}$ such that 

\noindent (1) $\mathcal{H}(i|j)=0^\gamma$ if $\mathcal{M}(i|j)=0$, and

\noindent (2) $\mathcal{H}(i|j)=a_{ij}^\tau$ with $a_{ij}\in[1,1+1/n]$ if $\mathcal{M}(i|j)=1$.

Since every $\rho\times \rho$ submatrix of $\mathcal{M}$ contains a zero, the permanent of every $\rho\times \rho$ submatrix of $\mathcal{H}$ should have a summand $g^\gamma$ with $g\leqslant 0+(r-1)(1+1/n)<r$. Therefore, the products of tangible entries do not contribute to the permanent of any $r\times r$ submatrix of $\mathcal{H}$.
\end{proof}

\begin{remr}
Many authors (see~\cite{CJR, DSS}) consider tropical matrices with finite entries only. We note that the bounds as in Lemmas~\ref{lemkapb} and~\ref{lemtropb} will still hold if we replace every $\infty$ in the entries of $\Phi$ by $2$. In fact, the resulting matrix can be obtained as $D\odot\Phi$, where $D$ is the $n\times n$ matrix with $0$'s on the diagonal and $2$'s everywhere else. Of course, the tropical rank of $D\odot\Phi$ is at most that of $\Phi$ (see Theorem~9.4 in~\cite{AGG}), and the proof of Lemma~\ref{lemkapb} reads equally well if we replace $\Phi$ by $D\odot\Phi$.
\end{remr}

\section{Constructing the matrix $\mathcal{M}$}

In this section, we give a probabilistic construction of the matrix as in Definition~\ref{defgood} and finalize the proof of Theorem~\ref{thrmainthis}.

\begin{lem}\label{lemgood}
Let $q\in(0,0.1)$ and $d\geqslant 2$. Then the numbers $$k=d,\mbox{$ $ $ $ $ $ $ $}r=4\ln d/q,\mbox{$ $ $ $ $ $ $ $}u=(1-q-d^{-1.5})(d^3-d^2)$$ are a good tuple in the sense of Definition~\ref{defgood}.
\end{lem}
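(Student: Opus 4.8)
The plan is to use a straightforward first-moment / union-bound argument to produce the matrix $\mathcal{M}$. We need a $d\times(kd-d)=d\times(d^2-d)$ matrix of zeros and ones with at least $u$ ones and no all-one $r\times r$ submatrix, where $r=4\ln d/q$ and $u=(1-q-d^{-1.5})(d^3-d^2)$. First I would let each entry of $\mathcal{M}$ independently be $1$ with probability $p=1-q/2$ and $0$ otherwise. The expected number of ones is $p(d^3-d^2)$, and by a Chernoff bound the number of ones is at least $(p-d^{-1.5})(d^3-d^2)\geqslant u$ except with probability at most, say, $\exp(-\Omega(d^{1.5}))$, which for $d\geqslant 2$ I can absorb into the final union bound (after checking the small cases of $d$ by hand or by slightly tuning constants).

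Next I would bound the probability that some fixed $r\times r$ set of positions is all-ones: this is $p^{r^2}$. The number of such submatrices is $\binom{d}{r}\binom{d^2-d}{r}\leqslant d^{r}(d^2)^{r}=d^{3r}$, so by the union bound the probability that \emph{any} $r\times r$ all-one submatrix occurs is at most $d^{3r}p^{r^2}$. It therefore suffices to show $d^{3r}p^{r^2}<1$ (with a little room to spare for the Chernoff term), i.e.\ $3r\ln d + r^2\ln p<0$, i.e.\ $r\ln(1/p)>3\ln d$. Since $\ln(1/p)=\ln\frac{1}{1-q/2}>q/2$ and $r=4\ln d/q$, we get $r\ln(1/p)>\frac{4\ln d}{q}\cdot\frac{q}{2}=2\ln d$ --- which is not quite $3\ln d$. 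So the truly routine choice $p=1-q/2$ is a hair too large; I would instead take $p$ a bit smaller, e.g.\ $p=1-\tfrac{3}{4}q$ (still giving expected density comfortably above $1-q-d^{-1.5}$ since the Chernoff slack is only $d^{-1.5}$), for which $\ln(1/p)>\tfrac34 q$ and hence $r\ln(1/p)>3\ln d$, closing the union bound with a margin. The bound $q<0.1$ and $d\geqslant 2$ are exactly what I need to make the logarithm estimate $\ln\frac{1}{1-cq}>cq$ valid and to keep $r\geqslant 1$ meaningful. Finally, since both bad events have total probability $<1$, a matrix $\mathcal{M}$ avoiding both exists, which is precisely the good tuple.

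The main obstacle — really the only subtle point — is the competition between the constant in $r$ and the constant in the density deficit $q$: the union bound needs $r\ln(1/p)$ to beat $3\ln d$, while the ``many ones'' requirement forces $1-p$ to stay below roughly $q$; both constraints scale with $q$, so one must check that the given constant $4$ in $r=4\ln d/q$ leaves enough slack. As sketched above it does, provided $p$ is chosen slightly below $1-q/2$ (the stated $u$ only demands density $1-q-d^{-1.5}$, so there is room). A secondary bookkeeping point is that the Chernoff failure probability $\exp(-\Omega(d^{1.5}))$ and the small additive slack $d^{-1.5}$ must be handled uniformly for all $d\geqslant 2$; for the very smallest $d$ one either verifies directly that $r\geqslant d$ (so condition (2) is vacuous) or that the asymptotic estimates already hold, and in fact when $r>d$ the submatrix condition is trivially satisfied, disposing of the small-$d$ regime entirely.
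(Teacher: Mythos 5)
Your proposal is correct and follows essentially the same route as the paper: independent Bernoulli entries, a Hoeffding/Chernoff bound for condition (1) of Definition~\ref{defgood}, and a union bound over all-one $\lceil r\rceil\times\lceil r\rceil$ submatrices for condition (2); the paper simply makes each entry zero with probability exactly $q$, so that $-\ln(1-q)>q$ already gives $r\ln(1/p)\geqslant 4\ln d>3\ln d$ with no need for your adjustment to $p=1-\tfrac34 q$, while the $d^{-1.5}$ term in $u$ supplies the slack for the concentration step. One quantitative slip worth noting: with deviation only $d^{-1.5}(d^3-d^2)\approx d^{1.5}$, which is of the same order as the standard deviation when $q$ is a fixed constant, the failure probability for the count of ones is merely a constant (Hoeffding gives $\exp(-2(1-1/d))<0.5$), not $\exp(-\Omega(d^{1.5}))$ --- but a constant below $1/2$ is all your union of two bad events requires, exactly as in the paper's proof.
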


\begin{proof}
Let $X$ be a random $d\times(d^2-d)$ matrix with independent entries each of which is either $0$ or $1$, and the probability of $0$ is $q$. According to Hoeffding's inequality (see Theorem~1 in~\cite{Hoeff}), the probability that the number of $1$-entries of $X$ does not exceed $u$ is at most $\exp(-2d^{-3}(d^3-d^2))<0.5$. Therefore, the condition~(1) as in Definition~\ref{defgood} fails with probability less than $0.5$.

We proceed with condition~(2), whose negation means that $X$ has an $\lceil r\rceil\times\lceil r\rceil$ submatrix of all ones. The probability that this happens with any particular such submatrix is at most $(1-q)^{r^2}$, and the number of these submatrices does not exceed $(d^2-d)^{r+1}d^{r+1}$. Therefore, the condition~(2) fails with probability at most
$$d^{3r+3}(1-q)^{r^2}<e^{\frac{12(\ln d)^2}{q}+3\ln d+\frac{16(\ln d)^2\ln (1-q)}{q^2}}<e^{\frac{(\ln d)^2}{q}\left(15+16\frac{\ln (1-q)}{q}\right)},$$
which is also less than $0.5$ because $\ln (1-q)/q<-1$.
\end{proof}

Now we can complete the proof of Theorem~\ref{thrmainthis}.

\begin{proof}[Proof of Theorem~\ref{thrmainthis}.] We define $d=\lfloor\sqrt{n}\rfloor$ and write 
$q=\left(\alpha-2n^{-0.25}\right)^2$. We can assume without loss of generality that the bound for the tropical rank is less than $n$ because otherwise the result is trivial. In particular, we have that $\alpha>2n^{-0.25}\sqrt{\ln n}$.

The numbers $d,k,r,u$ as in Lemma~\ref{lemgood} allow us to construct a $d^2\times d^2$ matrix $\Phi$ satisfying the assumptions of Definition~\ref{defgood}; we complete $\Phi$ to an $n\times n$ matrix $\Phi_0$ by adding the copies of existing rows and columns. According to Lemma~\ref{lemkapb}, the Kapranov rank of $\Phi_0$ is at least
$$d^2-\sqrt{d^4-d^4(1-d^{-1})(1-q-d^{-1.5})}\geqslant d^2\left(1-\sqrt{q}-\sqrt{d^{-1}+d^{-1.5}}\right),$$
which is greater than or equal to $n(1-\sqrt{q}-2n^{-0.25})=n(1-\alpha)$. By Lemma~\ref{lemtropb}, the tropical rank of $\Phi_0$ does not exceed
$$\frac{4d\ln d}{q}+d\leqslant\frac{2.4\sqrt{n}\ln n}{q}\leqslant\frac{4\sqrt{n}\ln n}{\alpha^2}.$$
\end{proof}

\end{document}